\theoremstyle{definition}
\theoremstyle{remark}
\numberwithin{equation}{section}
\newcommand{\R}{\mathbb R}
\def\TagOnRight
\def\R {\mathbb{R}}
\newcommand{\be}{\begin{equation}}
\newcommand{\ee}{\end{equation}}
\newcommand{\bea}{\begin{eqnarray}}
\newcommand{\eea}{\end{eqnarray}}
\newcommand{\Bea}{\begin{eqnarray*}}
\newcommand{\Eea}{\end{eqnarray*}}
\newcommand{\bt}{\begin{Theorem}}
\newcommand{\et}{\end{Theorem}}
\newcommand{\bpr}{\begin{Proposition}}
\newcommand{\epr}{\end{Proposition}}
\newcommand{\bl}{\begin{Lemma}}
\newcommand{\el}{\end{Lemma}}
\newcommand{\bi}{\begin{itemize}}
\newcommand{\ei}{\end{itemize}}
\newtheorem{Definition}{Definition}[section]
\newtheorem{Theorem}[Definition]{Theorem}
\newtheorem{Lemma}[Definition]{Lemma}
\newtheorem{Proposition}[Definition]{Proposition}
\newtheorem{Corollary}[Definition]{Corollary}
\newtheorem{Remark}[Definition]{Remark}
\begin{document}
\baselineskip16pt

\title{Maximal functions associated to flat plane curves with Mitigating factors}
\author{Ramesh Manna}
\address{School of Mathematics, Harish-Chandra Research Institute, Allahabad, India 211019.}
\address {\& Homi Bhabha National Institute, Training School Complex, Anushakti Nagar, Mumbai 400085, India.} 
\email {rameshmanna@hri.res.in, rameshmanna21@gmail.com}
\subjclass[2010]{Primary 42B25; Secondary 42B15; 46T30}
\date{\today}
\keywords{Maximal operator, infinitely flat curve, Mitigating factors, local smoothing.}

\begin{abstract}

We study the boundedness problem for maximal operators $\mathbb{M}_{\sigma}$ associated to flat plane curves with Mitigating factors, defined by
$$\mathbb{M}_{\sigma}f(x) \, := \, \sup_{1 \leq t \leq 2} \left|\int_{0}^{1} f(x-t\Gamma(s)) \, (\kappa(s))^{\sigma} \,  ds\right|,$$
where $\kappa(s)$ denotes the curvature of the curve $\Gamma(s)=(s, g(s)+1), ~g(s) \in C^5[0,1]$ in $\mathbb{R}^2$. Let $\triangle$ be the closed triangle with vertices $P=(\frac{2}{5}, \frac{1}{5}), ~ Q=(\frac{1}{2}, \frac{1}{2}), ~ R=(0, 0).$

In this paper, we prove that for $ (\frac{1}{p}, \frac{1}{q}) \in \left[(\frac{1}{p}, \frac{1}{q}) :(\frac{1}{p}, \frac{1}{q}) \in \triangle \setminus \{P, Q\} \right]  \cap \left[(\frac{1}{p}, \frac{1}{q}) :q > max\{\sigma^{-1},2\} \right]$,  there is a constant $B$ such that 
$ 
\|\mathbb{M}f\|_{L^q(\mathbb{R}^2)} \leq \, B \, \|f\|_{L^p(\mathbb{R}^2)}. $
\end{abstract}
\maketitle

\section{Introduction} \label{section1}
Let $\mathbb{C}$ denote a  smooth, compactly supported curve in the plane that does not pass through the origin, and denoting by $t \, \mathbb{C}$ the curve dilated by a factor $t>0,$ we consider the averaging operator defined for functions $f \in \mathscr{S},$ the Schwartz class of functions, by 
$$M_tf(x) \, := \, \int_{\mathbb{C}} f(x-ty) \, d\sigma(y),$$
where $d\sigma$ denotes the normalized Lebesgue measure over the curves $\mathbb{C}$. Consider now the maximal operator given by 
\begin{eqnarray}
\mathcal{M}f(x) \, := \, \sup_{t>0} |M_tf(x)|. \nonumber
\end{eqnarray}

It is not obvious that such averaging operators are well defined for $f$ in $L^p-$ spaces, since the curve $\mathbb{C}$ has measure zero in $\mathbb{R}^2.$ Nevertheless, a priori $L^p-L^q$ estimates are possible when $\mathbb{C}$ has suitable curvature properties.
Therefore, a natural question, we ask is for what range of the exponents $p$ and $q$ is  the following a priori inequality satisfied:
\begin{eqnarray} \label{1.1}
\|\mathcal{M}f\|_{L^q(\mathbb{R}^2)} \leq \, B \, \|f\|_{L^p(\mathbb{R}^2)}, ~ f \in \mathscr{S}.
\end{eqnarray}

%The aim of this paper is to study the boundedness of maximal operator associated with curves from $L^p(\mathbb{R}^2)$ to $L^q(\mathbb{R}^2).$ 
%There is a vast literature on maximal and averaging operators over families of lower dimensional curves in the plane. 
%The main issue turns out to be the curvature: roughly speaking, non-flat curves admit non-trivial maximal estimates; whereas flat curves do not.
%More generally, curvature condition plays a crucial role in the analysis of $\mathcal{M},$ and also in other problems in harmonic analysis, cf. \cite{EW}, \cite{EMS}. A fundamental and representative positive result in this direction is the Bourgain's circular maximal operator.

There is a vast literature on maximal and averaging operators over families of lower dimensional curves in the plane (see \cite{EW, EMS}). The study of such a maximal operator over dilations of a fixed curves $\mathbb{C} \subset \mathbb{R}^2$ has its beginnings in the circular maximal theorem of Bourgain (see, \cite{B}).
Bourgain showed that when $ \mathbb{C}=\mathcal{S}^1,$ the unit circle, the corresponding maximal operator is bounded on $L^p(\mathbb{R}^2)$ for $p > 2.$
His proof of the circular maximal theorem  relies more directly on the geometry involved. The relevant geometry information concerns intersections of pairs of thin annuli, (for more details, see \cite{B}). 
Other proof is due to Mockenhaupt, Seeger and Sogge (see, \cite{MSS2}) and proof of this result is based on their local smoothing estimates (see also, \cite{MSS}).
These local smoothing estimates, as well as Bourgain's original techniques actually implies that if one modifies the definition so that the supremum is taken over $1 < t < 2,$
then the resulting circular maximal operator  is bounded from $L^p(\mathbb{R}^2)$ to $L^q(\mathbb{R}^2)$ for some $q > p.$
Here, the maximal operator $\mathbb{M}$ is defined by 
\begin{eqnarray} \label{1.2}
\mathbb{M}f(x) \, := \, \sup_{1 \leq t \leq 2} \left|\int_{\mathbb{C}} f(x-ty) \,  d\sigma(y) \right|.
\end{eqnarray}

Let $\triangle$ be the closed triangle with vertices $P=(\frac{2}{5}, \frac{1}{5}), ~ Q=(\frac{1}{2}, \frac{1}{2}), ~ R=(0, 0).$ In 1997, Schlag (see, \cite{SG}) showed that if $\mathbb{C}$ is unit circle, then the maximal operator $\mathbb{M}$ satisfies the inequality \eqref{1.1} if $(\frac{1}{p}, \frac{1}{q})$ lies in the interior of $\triangle.$ His result was obtained using the "combinatorial method" of Kolasa and Wolff cf. \cite{KW}. A different proof of this result was later obtained by Schlag and Sogge cf. \cite{SGS}, which was based on a simple application of Sobolev's theorem and the appropriate local smoothing estimates. Schlag also showed that except possibly for endpoints, this result is sharp (see, \cite{SG}, \cite{SGS}). Later, in 2002, Sanghyuk Lee (see, \cite{SL}) consider the remaining endpoint estimates for the circular maximal operator.

Therefore, the main question, we ask is, whether the same  priori maximal inequality \eqref{1.1} holds even if we consider a situation when the curvature is allowed to vanish of finite order on a finite set of isolated points. In this connections, Iosevich (see, \cite{IOSE}) had already shown that: If $\mathcal{C}$ is of finite-type curve, whose curvature vanishes to order at most $m-2$ at a single point. Then, the inequality 
$$\|\mathcal{M}f\|_{L^p(\mathbb{R}^2)} \leq \, B_p \, \|f\|_{L^p(\mathbb{R}^2)},$$ holds for $p > m,$ also this result is sharp i.e., $\mathcal{M}$ is unbounded if $p=m$. If we study finite-type curves in the plane given by $\Gamma(s)=(s, g(s)+1),~s \in[0,1],$ for some suitably smooth $g$, where $0=g(0)=g'(0)=\dots=g^{(m-1)}(0)\neq g^{(m)}(0)>0,$ then we can reinterpret his results as follows. 
Define 
$$\mathcal{M}f(x) \, := \, \sup_{t>0} \left|\int_{2^{-k}}^{2^{1-k}} f(x-t\Gamma(s)) \,  ds\right|,$$ for Schwartz class of functions $f.$ Iosevich proved that
$$\|\mathcal{M}f\|_{L^p\rightarrow L^p} \leq c_p  \, 2^{-k(1-m/p)},$$ for $p >2.$ If we note that $\kappa(s),$ the curvature of the curve $\Gamma(s)$ is approximately $2^{-k(m-2)}$ whenever $s \in [2^{-k}, 2^{1-k}],$ then we have the operator 
$$\mathcal{M}_{\sigma}f(x) \, := \, \sup_{t>0} \left|\int_{0}^{1} f(x-t\Gamma(s)) \, (\kappa(s))^{\sigma} \,  ds\right|,$$ is bounded on $L^p$ for some $p>2,$ if $\sigma$ is sufficiently large, since 
\begin{eqnarray}
\|\mathcal{M}_{\sigma}f\|_{L^p\rightarrow L^p} &\leq& C \sum_{k \geq 0} 2^{-k(m-2) \sigma} \, \|\mathcal{M}_kf\|_{L^p\rightarrow L^p} \nonumber \\
&\leq& C \sum_{k \geq 0} 2^{-k((m-2) \sigma+ 1-m/p)} 
\end{eqnarray}

which is finite so long as $\sigma > (m/p-1) (m-2)^{-1}$. If we want to choose $\sigma$ independent of $m>2,$ the type of the curve, such that $\mathcal{M}_{\sigma}$ is bounded on $L^p$ for some fixed $p> 2$, then clearly we can take $\sigma =1/p$. In this connection, Marletta \cite{GM} proved that $\mathcal{M}_{\sigma}$ is bounded on $L^p$ for $p> \max\{\sigma^{-1},2\}$, for a class of infinitely flat, convex curves in the plane. Counterexamples in \cite{GM} showed that this is the best possible result, in the sense that there exist flat curves for which $\mathcal{M}_{\sigma}$ is unbounded for $2 < p \leq \sigma^{-1}.$

\begin{Remark}
As has been observed by J. G. Bak, such estimates for maximal functions with
mitigating factors give rise to Orlicz space estimates for the corresponding maximal functions with no mitigating factor, with the help of Bak's interpolation lemma. See, for example \cite{BAK} for the higher dimensional equivalents.
\end{Remark}

In this paper, we consider the maximal operator $\mathbb{M}_{\sigma},$ given by 
\begin{eqnarray} \label{01.3}
\mathbb{M}_{\sigma}f(x) \, := \, \sup_{1 \leq t \leq 2} \left|\int_{0}^{1} f(x-t\Gamma(s)) \, (\kappa(s))^{\sigma} \,  ds\right|.
\end{eqnarray}
Here, we shall extend the result of Marletta \cite{GM} to $L^p-L^q$ estimates for the corresponding maximal operator \ref{01.3} associated to families of smooth, compactly supported curves in the plane. The proof of our main result, Theorem \ref{thm2.1} will strongly make use of the results in \cite{GM} by Marletta.

We first decompose the operator $\mathbb{M}_{\sigma}$ into a family of operators $\{M_k\}_{k \geq 0}.$ We can actually obtain the $L^p \to L^q$ estimate from the corresponding $L^p-L^q$ estimate for each of the operators $M_k,~k \geq 0$, by summing a geometric series. We make a decomposition of the $s$- space in intervals where the curvature $\kappa(s)$ of $\Gamma(s)$ is relatively constant, see section 4. This is markedly different from the decomposition used in Fourier analysis (e.r. averages over flat curves in the plane), where dealing with dyadic decomposition in the space variable yields required estimates for the maximal operator over finite type curves \cite{IOSE}.

Our estimate for $M_kf$ relies on stationary phase method and the local smoothing estimates of S. Lee \cite{SL}. The $L^p$- boundedness of the operators $M_kf, ~k\geq 0$ are known by the work of Iosevich. Here, we will extend this estimate to $L^p \to L^q$ and also in the case where we consider the maximal operator over infinitely flat curves.

A key fact which leads to the proof is that the norm of the operator $M_k$ will remain same even the curve $\Gamma(s)$ is replaced by $L(\Gamma(s))$, where $L$ is the invertible linear map on $\mathbb{R}^2$, see Lemma \ref{Lma4.1}.

Next, we formulate the main result of this paper in $(\S, 2)$. 

\section{main result and Preliminaries}
In this section, we shall state our main result of this paper.
\begin{Theorem} \label{T03.1}
Let $\Gamma(s)=(s, g(s)+1), ~s \in [0,1],$ be a curve in $\mathbb{R}^2$ such that $g \in C^5[0,1], ~g(0)=g'(0)=0,~g^{(r)}(s)$ is single-signed and monotonic for $r=1,2,3,4,5,$ and both $\log~ g^{''}(s)$ and $\log~ g^{'''}(s)$ are concave. 
Let, $$\mathbb{M}_{\sigma}f(x) \, = \, \sup_{1 \leq t \leq 2} \left|\int_{0}^{1} f(x-t\Gamma(s)) \, (\kappa(s))^{\sigma} \,  ds \right|,$$
where $\kappa(s)$ denotes the curvature of the curve $\Gamma(s).$

Then, for $ (\frac{1}{p}, \frac{1}{q}) \in \left[(\frac{1}{p}, \frac{1}{q}) :(\frac{1}{p}, \frac{1}{q}) \in \triangle \setminus \{P, Q\} \right]  \cap \left[(\frac{1}{p}, \frac{1}{q}) :q > max\{\sigma^{-1},2\} \right]$,  there is a constant $B$ such that the following inequality 
\begin{eqnarray} \label{thm2.1}
\|\mathbb{M}_{\sigma}f\|_{L^q(\mathbb{R}^2)} \leq \, B  \,  \|f\|_{L^p(\mathbb{R}^2)}, ~ f \in \mathscr{S}(\mathbb{R}^2)
\end{eqnarray}
 holds.
  
\end{Theorem}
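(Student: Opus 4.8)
The plan is to bound $\mathbb{M}_{\sigma}$ by a sum of localized maximal operators, estimate each summand by transplanting the known local circular maximal bound to a model arc, and then sum the resulting series. First I would decompose $[0,1]$ into intervals $\{I_k\}_{k\geq 0}$ on which the curvature is essentially constant, $\kappa(s)\approx\lambda_k\approx 2^{-k}$, writing $\ell_k=|I_k|$ and
\[
M_kf(x)=\sup_{1\leq t\leq 2}\left|\int_{I_k}f(x-t\Gamma(s))\,\kappa(s)^{\sigma}\,ds\right|.
\]
Since the supremum of an integral over $\bigcup_kI_k$ does not exceed the sum of the suprema over the pieces, one has $\mathbb{M}_{\sigma}f\leq\sum_{k\geq 0}M_kf$ pointwise, hence $\|\mathbb{M}_{\sigma}f\|_{L^q}\leq\big(\sum_{k}\|M_k\|_{L^p\to L^q}\big)\|f\|_{L^p}$, and it suffices to show that this series converges on the stated region. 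The hypotheses on $g$ (each $g^{(r)}$ single-signed and monotone, and $\log g''$, $\log g'''$ concave) are, following Marletta \cite{GM}, precisely what guarantees that this curvature-adapted decomposition is well behaved, i.e. that $\lambda_k$ and $\ell_k$ vary monotonically and in a controlled joint fashion; this is the point at which the results of \cite{GM} are used.

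For a fixed $k$ I would reduce $M_k$ to a model operator. Taylor expanding $g$ at the left endpoint of $I_k$ and applying an affine change of variables $L_k$ (a shear removing the linear term together with anisotropic dilations, with $|\det L_k|\approx(\lambda_k\ell_k^{3})^{-1}$) sends the arc to a fixed nondegenerate arc, e.g. $u\mapsto(u,u^{2})$ on $[0,1]$. Using the affine invariance of the operator norm (Lemma \ref{Lma4.1}), the substitution $s=a_k+\ell_ku$ in the integral, and $\kappa(s)^{\sigma}\approx\lambda_k^{\sigma}$ on $I_k$, this schematically produces a bound of the form $\|M_k\|_{L^p\to L^q}\lesssim\lambda_k^{\sigma}\,\ell_k\,|\det L_k|^{\frac1p-\frac1q}\,\|\widetilde{M}_k\|_{L^p\to L^q}$, where $\widetilde{M}_k$ is the local maximal operator over the model arc. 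The bound for the model arc rests on the $|\xi|^{-1/2}$ decay of the Fourier transform of its arc-length measure (stationary phase, using the nonvanishing curvature of $u\mapsto(u,u^{2})$) together with the local smoothing estimates of S. Lee \cite{SL}, which are exactly what convert the fixed-$t$ averaging estimate into a bound for the supremum over $1\leq t\leq 2$; this yields finiteness precisely for $(\tfrac1p,\tfrac1q)\in\triangle\setminus\{P,Q\}$, which is the local circular maximal theorem of Schlag \cite{SG} and Schlag--Sogge \cite{SGS} in the $L^p\to L^q$ range.

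Finally I would insert $|\det L_k|\approx(\lambda_k\ell_k^{3})^{-1}$ and sum over $k$. Heuristically the summand is comparable to $\lambda_k^{\sigma+\frac1q-\frac1p}\,\ell_k^{1-3(\frac1p-\frac1q)}$, and the two constraints should emerge from convergence of this sum: the triangle condition (with $q>2$ active only near the vertex $Q$) is exactly where the model bound is finite, while $q>\sigma^{-1}$, i.e. $\sigma>1/q$, is what renders the power of $\lambda_k$ summable as the curve flattens. I expect the main obstacle to be making this last step rigorous and sharp, and the difficulty is twofold. First, the anisotropic rescaling places the model arc at a large distance $|w_k|\to\infty$ from the dilation origin (with $w_k=L_k\,\Gamma(a_k)$), so the dependence of $\|\widetilde{M}_k\|_{L^p\to L^q}$ on the position and scale of the arc must be tracked carefully at the correct scale through the local smoothing estimate, rather than treated as a fixed constant. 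Second, unlike the finite-type case, where a type exponent $m$ forces the clean relation $\lambda_k\approx\ell_k^{m-2}$ and a genuine geometric series, in the infinitely flat regime one has no such exponent, so the monotonicity and log-concavity hypotheses on $g$ must be invoked to control the joint behaviour of $(\lambda_k,\ell_k)$ and to verify that $q>\max\{\sigma^{-1},2\}$ suffices for convergence uniformly over all admissible curves. Carrying out this uniform summation is the technical heart of the argument.
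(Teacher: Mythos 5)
Your overall architecture does match the paper's: decompose $[0,1]$ into intervals $I_k$ on which $g''\approx 2^{-k}$, renormalize each piece by an affine map (the paper's Lemma \ref{Lma4.1}), feed the result into local smoothing estimates of Lee, and sum. But there is a genuine gap precisely at the point you label ``the technical heart,'' and it is not a routine verification. Because the dilation parameter $t$ multiplies the \emph{whole} curve, translations cannot be normalized away: $f(x-t(\Gamma(s)+v))$ is not a translate of $f(x-t\Gamma(s))$. Hence the constant height $+1$ in $\Gamma(s)=(s,g(s)+1)$ survives the anisotropic rescaling and becomes a vertical offset $\beta_k\approx 2^{k}|I_k|^{-2}\to\infty$. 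Your ``model operator'' $\widetilde M_k$ is therefore \emph{not} the maximal operator of a fixed compact arc near the origin with $O(1)$ norm, and one cannot cite Schlag \cite{SG} or Schlag--Sogge \cite{SGS} as a black box, since those results say nothing about how the constant depends on the position of the curve. The needed statement is exactly the paper's Lemma \ref{Lma2}: for a normalized arc $(s+\alpha,\omega(s)+\beta)$ the local maximal operator has $L^p\to L^q$ norm $\lesssim|\beta|^{1/q}$, and its proof (stationary phase on the arc measure, dyadic frequency decomposition, a Sobolev-type inequality in $t$, Lee's local smoothing \cite{SL,SL1} for phases with rank-one Hessian, and an interpolation lemma to sum the dyadic pieces, with the $(|\sigma|+1)^{1/q}$ dependence tracked throughout) occupies essentially all of Section 3. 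Your proposal flags this as an obstacle but supplies no argument for it, so as it stands it is an outline, not a proof.

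The omission is not cosmetic, because your exponent bookkeeping comes out wrong without it. Treating $\|\widetilde M_k\|$ as $O(1)$ and inserting $|\det L_k|^{1/p-1/q}$ gives your summand $\lambda_k^{\sigma+1/q-1/p}\,\ell_k^{1-3(1/p-1/q)}$, which is summable whenever $\sigma>1/p-1/q$; near the vertex $Q$ (where $p=q=2$) this would assert boundedness for \emph{every} $\sigma>0$, contradicting the necessity of $\sigma\geq 1/q$ proved by the test function $f=\eta\,|x_2|^{-1/q}$ in Section 5 (and Marletta's counterexamples \cite{GM}). The paper's accounting is different: the shear in Lemma \ref{Lma4.1} has determinant one, the anisotropic dilation is absorbed into the hypotheses of Lemma \ref{Lma2}, and the entire cost of renormalization appears through the single factor $|\beta_k|^{1/q}\approx(2^{k}|I_k|^{-2})^{1/q}$, so that $\|M_k\|_{L^p\to L^q}\lesssim 2^{k/q}|I_k|^{1-2/q}$ and the summand is $2^{-k(\sigma-1/q)}|I_k|^{1-2/q}$. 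This is what produces the stated range $q>\max\{\sigma^{-1},2\}$: the condition $\sigma>1/q$ comes from the position factor $\beta_k^{1/q}$, not from a Jacobian factor, and not from the hypotheses on $g$. Those hypotheses (monotone derivatives, concavity of $\log g''$ and $\log g'''$) enter only to show that the renormalized curves $\omega$ have $C^5$ norms bounded uniformly in $k$ (via $|I_k|\sim g''(s_k)/g'''(s_k)$), i.e.\ that Lemma \ref{Lma2} applies with constants independent of $k$; they cannot be ``invoked'' to repair the convergence of a summand whose powers are already incorrect.
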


\begin{Remark}
As $|\Gamma'(s)|$ is bounded between two constants $c$ and $C$ independent of everything, we have
$$0 \leq c \, g^{''}(s) \leq \kappa(s) \leq C \, g{''}(s),$$ 
and so we may replace $\kappa(s)$ by $g^{''}(s)$ in the maximal function. Notice also that as $g^{''}(s)$ is bounded, it suffices to consider only $0 < c \leq g^{''}(s) \leq 1.$ The strange conditions concerning $\log g^{''}(s)$ and $\log g^{'''}(s)$ are more than we require here, see section $4$.
\end{Remark}

We now detail the dyadic decomposition of the dual space that is needed. 
\subsection{\large The dyadic decomposition} \label{subs1.1}
The proof of our main result, as well as many other arguments that involve explicitly (or implicitly) the Fourier transform, makes use of the division of the dual (frequency) space into dyadic shells. Dyadic decomposition, whose ideas originated in the work of Littlewood and Paley, and others, will now be described in the form most suitable for us (see, \cite{EMS}).

Let $\beta$ be a non negative radial function in $C_c^{\infty}(\mathbb{R}^2)$ supported in $\{ \frac{1}{2} \leq |\xi| \leq 2 \}$ such that 
$$\sum_{j= - \infty}^{\infty} \, \beta(2^{-j} \xi) =1 \mbox{ for } \xi \neq 0.$$
For example, we shall take,
\begin{eqnarray*}
\phi(\xi)=
\left\{\begin{array}{ll}
1, &\mbox{ if } |\xi| \leq \frac{1}{2}\\
~~0, &\mbox{ if } |\xi| \geq 1.
\end{array}\right.
\end{eqnarray*}
and $$\beta(\xi) \, = \, \Phi(\frac{\xi}{2})-\Phi(\xi).$$
Then, one can easily see that  $\sum\limits_j\beta(2^{-j}\xi) \, = \, 1, ~~~\xi \, \neq \, 0$ (see \cite{D}).

We shall use $C$ as a constant independent of $j,$ in several times without mention it.

\section{Scaling}
Lemma \ref{Lma2} is crucial in the proof of our theorem. The idea of the proof of the lemma \ref{Lma2} is based on Iosevich's approach in \cite{IOSE}.

\begin{Lemma} \label{Lma2}
Let $\tilde{\Omega}(s)=(s+\alpha, \omega(s)+\beta), ~s \in [0,1],$ be a curve in $\mathbb{R}^2$ such that $\omega \in C^5[0,1], ~\omega(0)=\omega'(0)=0,~\omega'(s) \leq 1/2$ and $0 < c \leq g^{''} \leq C < \infty,$ where $\beta$ and  $\alpha$ are constants satisfying $|\beta| \geq max \{|\alpha|, 2\}.$  
Let, $$\tilde{M}_tf(x) \, =  \int_{0}^{1} f(x-t \tilde{\Omega}(s)) \,  ds,$$ and

$$\tilde{\mathbb{M}}f(x) \, = \, \sup_{1 \leq t \leq 2} |\tilde{M}_tf(x)|,$$ be the averaging operator and the maximal function corresponding to $\tilde{\Omega},$ respectively.

Then, for $ (\frac{1}{p}, \frac{1}{q}) \in \left[(\frac{1}{p}, \frac{1}{q}) :(\frac{1}{p}, \frac{1}{q}) \in \triangle \setminus \{P, Q\} \right]  \cap \left[(\frac{1}{p}, \frac{1}{q}) :q > 2 \right]$,  the following inequality 
\begin{eqnarray} \label{thm3.1}
\|\tilde{\mathbb{M}}f\|_{L^q(\mathbb{R}^2)} \leq \, B  \, |\beta|^{1/q} \,  \|f\|_{L^p(\mathbb{R}^2)}, ~ f \in \mathscr{S}(\mathbb{R}^2)
\end{eqnarray}
 holds, where the constant $B$ depends only on the $c, C$ and the $C^5$ norm of $\omega(s)$.  
\end{Lemma}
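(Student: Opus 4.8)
The plan is to reduce the translated arc $\tilde\Omega$ to the centered nondegenerate arc $\Omega(s)=(s,\omega(s))$ and to absorb the translation by $(\alpha,\beta)$ into a modulation, tracking where the factor $|\beta|^{1/q}$ is manufactured. First I would write $\tilde\Omega(s)=(\alpha,\beta)+\Omega(s)$, so that the averaging operator factors as
$$\tilde M_t f(x)=\int_0^1 f\big((x-t(\alpha,\beta))-t\Omega(s)\big)\,ds=\big(M_t^\Omega f\big)\big(x-t(\alpha,\beta)\big),$$
where $M_t^\Omega$ is the average over the fixed arc $\Omega$; on the Fourier side this is the multiplier $e^{-2\pi i t(\alpha,\beta)\cdot\xi}\,\widehat{d\mu_t}(\xi)$. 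The hypotheses $\omega(0)=\omega'(0)=0$, $\omega'\le 1/2$ and $c\le\omega''\le C$ make $\Omega$ a genuinely curved compact $C^5$ arc to which the stationary phase analysis and the local smoothing estimates of S. Lee \cite{SL} apply, with constants depending only on $c,C$ and $\|\omega\|_{C^5}$. I then split into Littlewood--Paley pieces $\tilde M_t^j f$ supported in $|\xi|\sim 2^j$ using $\beta$ from \S\ref{subs1.1}, disposing of $j\le 0$ by trivial $L^p$--$L^q$ bounds for the smooth, rapidly decaying low-frequency kernels.

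The key observation is that the $t$-dependent translation is harmless for space--time norms but is exactly what produces $|\beta|^{1/q}$ in the maximal (supremum in $t$) estimate. Indeed, for each fixed $t$ the map $x\mapsto x-t(\alpha,\beta)$ is measure preserving, so
$$\|\tilde M^j f\|_{L^q_{t,x}([1,2]\times\mathbb R^2)}=\|M^{\Omega,j} f\|_{L^q_{t,x}([1,2]\times\mathbb R^2)},$$
and the right-hand side is governed by Lee's local smoothing inequality for $\Omega$: an estimate of the form $\|M^{\Omega,j}f\|_{L^q_{t,x}}\lesssim 2^{-j(1/q+\epsilon)}\|f\|_{L^p}$ for some $\epsilon=\epsilon(p,q)>0$ valid when $(1/p,1/q)$ lies in the open triangle $\triangle$ and $q>2$, which is precisely what is needed to beat the loss $2^{j/q}$ below. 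To pass from this space--time bound to the pointwise supremum I would use the Sobolev (fundamental theorem of calculus) embedding in $t$,
$$\Big\|\sup_{1\le t\le 2}|\tilde M_t^j f|\Big\|_{L^q}\lesssim \|\tilde M_1^j f\|_{L^q}+\|\tilde M^j f\|_{L^q_{t,x}}^{1-1/q}\,\|\partial_t \tilde M^j f\|_{L^q_{t,x}}^{1/q}.$$
Differentiating $\tilde M_t^j f(x)=(M_t^{\Omega,j}f)(x-t(\alpha,\beta))$ in $t$ produces the transport term $-(\alpha,\beta)\cdot(\nabla M_t^{\Omega,j}f)(x-t(\alpha,\beta))$, whose frequency localization contributes a factor $\sim 2^j$ from $\nabla$ and whose coefficient has size $|(\alpha,\beta)|\sim|\beta|$ (using $|\beta|\ge|\alpha|$); this dominates the intrinsic $t$-derivative of $M^\Omega$. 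Hence $\|\partial_t\tilde M^j f\|_{L^q_{t,x}}\lesssim |\beta|\,2^j\,\|M^{\Omega,j}f\|_{L^q_{t,x}}$, and raised to the power $1/q$ this is exactly where $|\beta|^{1/q}$ (together with the harmless loss $2^{j/q}$) enters.

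Assembling the pieces per shell, and using $\|\tilde M_1^j f\|_{L^q}=\|M_1^{\Omega,j}f\|_{L^q}$ by translation invariance again, I would obtain
$$\Big\|\sup_{1\le t\le 2}|\tilde M_t^j f|\Big\|_{L^q}\lesssim \big(|\beta|\,2^j\big)^{1/q}\,2^{-j(1/q+\epsilon)}\|f\|_{L^p}+\|M_1^{\Omega,j}f\|_{L^q}=|\beta|^{1/q}\,2^{-j\epsilon}\|f\|_{L^p}+\|M_1^{\Omega,j}f\|_{L^q}.$$
Summing over $j\ge 1$ gives a convergent geometric series with total constant $\lesssim|\beta|^{1/q}$; the low-frequency and fixed-time $t=1$ contributions sum to $\lesssim\|f\|_{L^p}\le|\beta|^{1/q}\|f\|_{L^p}$ since $|\beta|\ge 2$, and the subadditivity $\sup_t|\tilde M_t f|\le\sum_j\sup_t|\tilde M_t^j f|$ closes the argument, with all constants depending only on $c,C$ and $\|\omega\|_{C^5}$ as claimed. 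The step I expect to be the main obstacle is securing the local smoothing estimate for $\Omega$ with a gain strictly better than $2^{j/q}$ throughout the whole open triangle (equivalently, verifying Lee's $L^p\to L^q_{t,x}$ smoothing in the precise range and checking the constants are uniform over the admissible $\omega$); this is also what forces the exclusion of the vertices $P$ and $Q$, where the available smoothing gain degenerates to $\epsilon=0$. A secondary technical point is making the $\partial_t$ computation rigorous at the level of kernels, so that the coefficient of the dominant transport term is sharply of size $|\beta|$ and the final power is exactly $|\beta|^{1/q}$ rather than something larger.
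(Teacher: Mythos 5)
Your skeleton is essentially the paper's own: reduce to the centered arc plus the modulation $e^{-it\sigma\cdot\xi}$, $\sigma=(\alpha,\beta)$, decompose dyadically in frequency, control the supremum in $t$ by the fundamental-theorem-of-calculus/H\"older trick, and note that $\sigma$ enters only through $\partial_t$ hitting the modulation, contributing a factor $|\sigma\cdot\xi|\lesssim|\beta|2^j$ whose $1/q$-th power produces $|\beta|^{1/q}2^{j/q}$; this is exactly the paper's computation (the amplitude of $\partial_t A_{j,t}$ is $(1+|\sigma|)2^{j/2}$ times an order-zero symbol). The genuine gap is in your summation step. The gain $\epsilon(p,q)$ you require does \emph{not} degenerate only at the vertices $P$ and $Q$: it vanishes identically along the whole closed segment $[P,Q]$, which is exactly the line $3/p-1/q=1$. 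Indeed, combining the $2^{-j/2}$ amplitude decay with Lee's estimate on the line $\frac1p+\frac3q=1$ and Plancherel at $(\frac12,\frac12)$, interpolation gives the maximal bound
\begin{equation*}
\|\mathbb{M}_jf\|_q\;\lesssim\;(|\sigma|+1)^{1/q}\,2^{j(\frac3p-\frac1q-1)/2}\,\|f\|_p,
\end{equation*}
i.e.\ the paper's \eqref{3.12}, and the exponent is $0$ on the entire line through $P$ and $Q$. Since the lemma asserts the estimate on $\triangle\setminus\{P,Q\}$, which \emph{contains} the open segment $(P,Q)$, your geometric series diverges precisely where the statement still claims boundedness. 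The paper closes this with an idea absent from your proposal: Bourgain's summation trick in the form of Lemma~\ref{L3.7} (Lee's interpolation lemma), applied to pairs of estimates with growth $2^{\epsilon_1 j}$ and decay $2^{-\epsilon_2 j}$ taken on either side of the line $PQ$, which yields restricted weak-type bounds $L^{p,1}\to L^{q,\infty}$ on $[P,Q)$; real interpolation of these with the trivial $L^\infty\to L^\infty$ bound then gives the strong-type estimates on all of $\triangle\setminus\{P,Q\}$.

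Two secondary points. First, the low-frequency (and stationary-phase remainder) piece is not "trivially'' $O(1)$: for $f=\chi_{B(0,1)}$, the function $\sup_{1\le t\le2}|f\ast k_t^{\sigma}|$, with $k^{\sigma}$ a unit-scale kernel translated by $\sigma$, is $\gtrsim 1$ on a tube of length $\sim|\beta|$, so this piece alone has $L^p\to L^q$ norm $\gtrsim|\beta|^{1/q}$. The bound $|\beta|^{1/q}$ (which your final assembly can absorb, since $|\beta|\ge 2$) therefore needs a genuine argument; the paper dominates this term by $(1+|\sigma|)$ times the Hardy--Littlewood maximal function via scaling in the direction of $\sigma$, and alternatively one can discretize $t\in[1,2]$ into $\sim|\beta|$ steps. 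Second, your inequality $\|\partial_t\tilde M^jf\|_{L^q_{t,x}}\lesssim|\beta|2^j\|M^{\Omega,j}f\|_{L^q_{t,x}}$ is not literally valid as stated (a derivative of an operator is not pointwise controlled by the operator); what is true, and what the paper uses, is that $\partial_t A_{j,t}$ is another Fourier integral operator with the same phase whose amplitude is $O\left((1+|\sigma|)2^{j/2}\right)$ times an order-zero symbol, hence it obeys the same local smoothing estimate with that extra factor.
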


\begin{Remark}
In view of Iosevich's theorem  (see, \cite{IOSE}), the maximal operator $\mathbb{M}$ is also of course bounded when the exponents lie on the half open line connecting $(\frac{1}{2}, \frac{1}{2})$ and $(0,0)$.
\end{Remark}

\begin{proof}
Our proof will consist of three main steps. First we shall decompose each operator $\tilde{M}_t$ away from the flat point. Then we shall use the method of stationary phase to express each dyadic operator in terms of the Fourier transform of the surface measure on each dyadic piece.  We shall then use a scaling argument and  a technical lemma to reduce the problem to the local smoothing estimates (see, S. Lee, \cite{SL, CDS2}) for the corresponding Fourier integral operator.

We now turn to the details. Now, choose a bump function $\phi \in C_c^{\infty}(\mathbb{R})$ supported in $[-2,2]$. Consider the linear operator
$$\tilde{M}_tf(x) \, =  \int_{\mathbb{R}} f(x-t \tilde{\Omega}(s)) \, \phi(s) \,  ds,$$

Then, the inequality \eqref{thm3.1} is equivalent to the following estimate for $\tilde{\mathbb{M}}:$
\begin{eqnarray} \label{04.3}
\|\tilde{\mathbb{M}}f\|_{L^q(\mathbb{R}^2)} \leq \, B \, (|\sigma|+1)^{\frac{1}{q}} \,  \|f\|_{L^p(\mathbb{R}^2)}, ~ f \in \mathscr{S}(\mathbb{R}^2),
\end{eqnarray}

for every $\sigma=(\alpha, \beta) \in \mathbb{R}^2,$ where $B$ is an admissible constant.
\noindent

By means of the Fourier inversion formula, we can write
$$\tilde{M}_tf(x)= \frac{1}{(2 \pi)^2} \int_{\mathbb{R}^2} e^{i(x-t\sigma). \xi} \, H(t \xi) \, \hat{f}(\xi) \, d \xi,$$
where
$$H(\xi_1, \xi_2): = \int_{\mathbb{R}} e^{-i(\xi_1 s+ \xi_2 \omega(s))} \, \phi(s) \, ds.$$
Using the method of stationary phase to $H(\xi),$ we obtain
$$H(\xi)=e^{iq(\xi)} \frac{\chi(\frac{\xi_1}{\xi_2}) A(\xi)}{(1 + |\xi|)^{\frac{1}{2}}} + B(\xi),$$
where $\chi$ is a smooth function supported on a small neighborhood of the origin. Moreover, $q(\xi)$ is a smooth function of $\xi$  which is homogeneous of degree $1$ in $\xi$ and also the Hessian $D^2_{\xi}q(\xi)$ has rank $1$.  Moreover, $A$ is a symbol of order zero such that $A(\xi)=0,$ if $|\xi| \leq C,$ and 
\begin{eqnarray}
|\xi^{\alpha} \, D^{\alpha}_{\xi} A(\xi)| \leq C_{\alpha}, ~ \alpha \in \mathbb{N}^2, ~ |\alpha| \leq 3,
\end{eqnarray}

where the $C_{\alpha}$ are admissible constants. Finally, $B$ is a remainder term satisfying 
\begin{eqnarray} \label{04.5}
| D^{\alpha}_{\xi} B(\xi)| \leq C_{\alpha, N} \, (1+|\xi|)^{-N},  ~ |\alpha| \leq 3,~ 0 \leq N \leq 3,
\end{eqnarray} 

again with admissible constants $C_{\alpha, N}$.
If we put 
$$\tilde{M}^0_tf(x)= \frac{1}{(2 \pi)^2} \int_{\mathbb{R}^2} e^{i(x-t\sigma). \xi} \, B(t \xi) \, \hat{f}(\xi) \, d \xi,$$
then by \eqref{04.5} $\tilde{M}^0_tf(x) = f \star k_t^{\sigma}(x),$ where $k_t^{\sigma}(x)=t^{-2}k(\frac{x}{t})$ and where $k^{\sigma}$ is the translate 
\begin{eqnarray} \label{03.5}
k^{\sigma}(x): = k(x-\sigma)
\end{eqnarray}

of $k$ by the vector $\sigma$ of a fixed function $k$ satisfying an estimate of the form
\begin{eqnarray} \label{03.7}
|k(x)| \leq C (1 + |x|)^{-3}.
\end{eqnarray}

Let $\tilde{\mathbb{M}}^0 f(x):= \sup_{1 \leq t \leq 2} |\tilde{M}^0_tf(x)|$ denote the corresponding maximal operator. Then,
\eqref{03.5} and the inequality \eqref{03.7} show that $\|\tilde{\mathbb{M}}^0 \|_{L^{\infty} \rightarrow L^{\infty}} \leq C,$ with a constant $C$ which does not depend on $\sigma.$ Moreover, scaling by the factor $(|\sigma|+1)^{-1}$ in direction of the vector $\sigma,$ we see that
$$|\tilde{M}^0_tf(x)| \lesssim (|\sigma|+1) \, M,$$
where $M$ is the Hardy-Littlewood maximal operator. Hence, \eqref{04.3} holds for 
$\tilde{\mathbb{M}}^0$ in place of $\tilde{\mathbb{M}},$ for every $1 \leq p \leq q.$

The maximal operator $\tilde{\mathbb{M}}^1,$ corresponding to the family of averaging operators
$$\tilde{M}^1_tf(x)= \frac{1}{(2 \pi)^2} \int_{\mathbb{R}^2} e^{i[\xi.x-t(\sigma.\xi+q(\xi))]} \,  \frac{\chi(\frac{\xi_1}{\xi_2}) A(\xi)}{(1 + |\xi|)^{\frac{1}{2}}} \, \hat{f}(\xi) \, d \xi,$$
remains to be studied.

Hence it is enough to show that 
\begin{eqnarray} \label{mainest}
\|\sup_{1 \leq t \leq 2}|\tilde{M}^1_tf(x)|\|_{L^q(\mathbb{R}^2)} \, \leq \, B \, (|\sigma|+1)^{\frac{1}{q}} \, \|f\|_{L^p(\mathbb{R}^2)}.
\end{eqnarray}

Using this Fourier integral representation, we shall break up the operators dyadically. For this purpose,
let us fix $\beta \in C_c^\infty(\mathbb{R}\setminus0)$ satisfying $\sum\limits_{-\infty}^{\infty}\beta(2^{-j}s)=1,~ s \neq 0,$ as in $(\S, 2).$

We then define the dyadic operator $A_{j,t}$ by
$$A_{j,t}f(x,t)= \frac{1}{(2 \pi)^2} \int_{\mathbb{R}^2} e^{i[\xi.x-t(\sigma.\xi+q(\xi))]} \,  \frac{\chi(\frac{\xi_1}{\xi_2}) A(t \xi)}{(1 + t |\xi|)^{\frac{1}{2}}} \, \beta(2^{-j} |t\xi|)\, \hat{f}(\xi) \, d \xi.$$

Since we may assume that $A$ vanishes on a sufficiently large neighborhood of the origin, we have $A_{j,t}f =0,$ if $j \leq0,$ so that
$$\tilde{\mathbb{M}}^1_tf(x) = \sum_{j=1}^{\infty} A_{j,t}f(x).$$ 

Therefore, the inequality \eqref{04.3}, would follow from showing that 
when  $ (\frac{1}{p}, \frac{1}{q}) \in \left[(\frac{1}{p}, \frac{1}{q}) :(\frac{1}{p}, \frac{1}{q}) \in \triangle \setminus \{P, Q\} \right]  \cap \left[(\frac{1}{p}, \frac{1}{q}) :q > 2 \right]$, there is a constant $B$ such that 
\begin{eqnarray} \label{3.5}
\|\sup\limits_{1 \leq t \leq 2}\sum_{j=1}^{\infty}|A_{j,t}f(x)|\|_{L^q(\mathbb{R}^2)} \leq B \, (|\sigma|+1)^{\frac{1}{q}} \, \|f\|_{L^p(\mathbb{R}^2)}.
\end{eqnarray}

Now, choose a bump function $\psi \in C_c^{\infty}(\mathbb{R})$ supported in $[\frac{1}{2}, 4]$ such that $\psi(t)=1$ if $1 \leq t \leq2.$

In order to estimate \eqref{3.5}, we use the following well- known estimate (see e.g., \cite{IOSE}, Lemma 1.3),

\begin{eqnarray} \label{3.6'}
&&\sup_{t \in \mathbb{R}}|\psi(t) \, A_{j,t}f(x)|^q  \\
&\leq& q\left(\int_{-\infty}^{\infty} |\psi(t) \, A_{j,t} f(x)|^q \, dt \right)^{\frac{q-1}{q}} \, \left( \int_{-\infty}^{\infty} \left|\frac{\partial}{\partial t}\left(\psi(t) \, A_{j,t}f(x) \right)\right|^q \, dt \right)^{\frac{1}{q}} \nonumber
\end{eqnarray}

which follows by using the fundamental theorem of calculus and H$\ddot{o}$lders inequality.

By H$\ddot{o}$lder's inequality, this implies
\begin{eqnarray} \label{3.6}
&&\|\sup_{1 \leq t \leq 2}  A_{j,t}f(x)\|^q_{L^q(\mathbb{R}^2)}  \\
&\leq& C \, q \left(\int_{\frac{1}{2}}^{4} \int_{\mathbb{R}^2} |A_{j,t} f(x)|^q \, dx \,  dt \right)^{\frac{q-1}{q}} \, \left( \int_{\frac{1}{2}}^{4} \int_{\mathbb{R}^2} \left|\frac{\partial}{\partial t}\left(A_{j,t} f(x) \right)\right|^q \,  dx \, dt \right)^{\frac{1}{q}} \nonumber \\
&& + C \int_{\frac{1}{2}}^{4} \int_{\mathbb{R}^2} | A_{j,t}f(x)|^q \, dx \,  dt. \nonumber
\end{eqnarray}

Now, $$\frac{\partial}{\partial t}\left(A_{j,t}f(x) \right) \, = \, \frac{1}{(2 \pi)^2} \,\int_{\mathbb{R}^2} e^{i[\xi.x-t(\sigma.\xi+q(\xi))]} \,  \chi(\frac{\xi_1}{\xi_2}) \, h(t,j, \xi)  \, \hat{f}(\xi) \, d \xi,$$
where, 
\begin{eqnarray}
h(t, j, \xi) \, &=& \, -i\frac{[\sigma. \xi + q(\xi)] \, A(t \xi)}{(1 + t |\xi|)^{\frac{1}{2}}} \, \beta(2^{-j} |t\xi|) + \frac{\partial}{\partial t} \,  \left( \frac{A(t \xi)}{(1 + t |\xi|)^{\frac{1}{2}}} \right) \, \beta(2^{-j} |t\xi|) \nonumber  \\&+& \frac{A(t \xi)}{(1 + t |\xi|)^{\frac{1}{2}}} \,  (2^{-j} |\xi|) \, \beta'(2^{-j} |t\xi|). \nonumber 
\end{eqnarray}

Now, if $t\sim1,$ since $A$ vanishes near the origin, we see that the amplitude of $A_{j,t}$ can be written as $2^{-j/2} \, a_{j,t}(\xi),$ where $a_{j,t}$ is a symbol of order $0$ localized where $|\xi|\sim2^j.$ Similarly, the amplitude of $\frac{\partial}{\partial t}A_{j,t}$ can be written as $2^{j/2}(|\sigma|+1) \, b_{j,t},$ where $b_{j,t}$ is a symbol of order $0$ localized where $|\xi|\sim2^j.$

Since, $q(\xi) \approx |\xi| \approx 2^j, $ on the support of $\beta,$ we can calculate the orders of the symbols to see that $\|\sup_{1 \leq t \leq 2}  A_{j,t}f(x)\|_{L^q(\mathbb{R}^2)}$ in \eqref{3.6} is dominated by 
$$C \, 2^{-j(\frac{1}{2}- \frac{1}{q})} \, (|\sigma|+1)^{1/q} \,  \|\mathcal{F}_{ j}f\|_{L^q(\mathbb{R}^3)},$$
where $$\mathcal{F}_{ j}f \, = \, \psi(t) \, \int e^{i<x, \xi>} \, e^{-it q(\xi)} \, \beta(2^{-j} |\xi|) \, a(t, \xi) \, \hat{f}(\xi) \, d\xi, $$ and where $a(t, \xi)$ is a symbol of order $0$ in $\xi.$ A similar argument can also be seen in \cite{MIK}.

Now, we need a local smoothing estimates for the operators of the form 
\begin{eqnarray} \label{3.9}
P_jf(x,t) \, = \,  \int e^{i<x, \xi>} \, e^{it q(\xi)} \,a(t, \xi) \, \beta(2^{-j}|\xi|) \, \hat{f}(\xi) \, d\xi,
\end{eqnarray}

where $a(t, \xi)$ is a symbol of order $0$ in $\xi$ and the Hessian matrix of $q$ has rank $1$ everywhere.

We will get the smoothing estimates by using some sharp Carleson-Sj$\ddot{o}$lin type estimates for the $2-$dimensional wave equation.
For this, let us define, 
$$\mathcal{F}_tf(x) \, = \, \int_{\mathbb{R}^2} e^{i[x.\xi +t |\xi|]} \, \hat{f}(\xi) \, d\xi.$$ 

We wish to use the following $L^p-L^q$ local smoothing estimates (see, \cite{SL}), which we shall only use for $N=2^j.$

\begin{Theorem} \label{thm3.5}
If supp $\hat{f} \subset \{ \xi \in \mathbb{R}^2 : |\xi|\sim N\},$ then for $\frac{1}{p} + \frac{3}{q} \, = \, 1, ~ \frac{14}{3} < q \leq \infty,$
we have

\bea \label{lee1}
\left(\int_{\mathbb{R}^2} \int_1^2 |\mathcal{F}_tf(x)|^q \, dt \, dx\right)^{\frac{1}{q}} \, \leq \, C \, N^{\frac{3}{2}- \frac{6}{q}} \, \|f\|_{L^p(\mathbb{R}^2)}.
\eea

\end{Theorem}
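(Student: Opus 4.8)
The plan is to recognize \eqref{lee1} as a sharp $L^p\to L^q$ local smoothing estimate for the half-wave propagator $\mathcal{F}_t=e^{it|D|}$ with frequency localized to the annulus $|\xi|\sim N$, and to reduce it to the bilinear/square-function theory for the light cone in $\mathbb{R}^3$. First I would normalize the frequency scale. Writing $\hat f(\xi)=\hat g(\xi/N)$, so that $f(x)=N^2 g(Nx)$ and $\mathcal{F}_t f(x)=N^2\,\mathcal{F}_{tN}g(Nx)$, and changing variables $y=Nx$, $\tau=tN$ in the space-time integral, \eqref{lee1} becomes equivalent to the statement that, for $g$ with $\hat g$ supported where $|\eta|\sim 1$,
\[
\Big(\int_{\mathbb{R}^2}\int_N^{2N}|\mathcal{F}_\tau g(y)|^q\,d\tau\,dy\Big)^{1/q}\ \leq\ C\,N^{-1/2+3/q}\,\|g\|_{L^p(\mathbb{R}^2)} ,
\]
a fixed-frequency estimate over the long time interval $[N,2N]$ in which $N^{-1/2}$ is the dispersive gain and the diagonal $\tfrac1p+\tfrac3q=1$ is preserved. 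The endpoint $q=\infty$ (hence $p=1$) of this reduced inequality is immediate from the $\tau^{-1/2}$ pointwise decay of the two-dimensional wave kernel, so all the content lies in reaching finite $q$ down to $14/3$.

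For the main range I would run the Córdoba–Fefferman second-dyadic decomposition of the cone: partition the unit annulus into $\sim N^{1/2}$ angular caps $\theta$ of width $N^{-1/2}$ and split $g=\sum_\theta g_\theta$ accordingly. Over $[N,2N]$ each $\mathcal{F}_\tau g_\theta$ is essentially transported along a family of parallel light rays, and after an affine change adapted to $\theta$ its phase is a linear term plus a one-dimensional nondegenerate (curvature-one) oscillation; a single-variable stationary-phase / Carleson–Sjölin estimate then controls each single-cap piece with the sharp power of $N$. The pieces are reassembled through an $L^q$ orthogonality (square-function) estimate for the cone, equivalently by promoting the bilinear restriction estimate for the light cone in $\mathbb{R}^3$ to a linear one in the Tao–Vargas–Vega framework. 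Interpolating the resulting $L^2$-based bound against the dispersive $L^1\to L^\infty$ endpoint yields exactly the line $\tfrac1p+\tfrac3q=1$, and summing over the $N^{1/2}$ caps produces the claimed factor $N^{3/2-6/q}$.

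The hard part is the reassembly step together with the precise threshold $q>14/3$: this number is not an artifact of the decomposition but the exponent at which the sharp bilinear cone estimate (Wolff's bilinear cone inequality and its Tao–Vargas–Vega linearization) first supplies the square-function control needed to sum the caps without loss. I would therefore not attempt to reprove that bilinear input from scratch; instead I would quote it in the form established by S.~Lee \cite{SL} (see also \cite{CDS2}), whose local smoothing estimate is precisely \eqref{lee1}, and confine my own work to the scaling reduction and the per-cap stationary-phase analysis, which are routine once the bilinear cone estimate is in hand.
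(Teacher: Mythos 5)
Your proposal matches the paper's treatment in substance: the paper does not prove Theorem \ref{thm3.5} at all, it simply quotes it as S.~Lee's local smoothing estimate from \cite{SL} (noting that the weaker range $q \geq 5$ is due to Schlag--Sogge \cite{SGS} and that the threshold $q > \frac{14}{3}$ rests on the Wolff--Tao bilinear cone restriction estimates together with the Tao--Vargas decomposition), which is exactly where your argument also bottoms out. Your scaling reduction to the interval $[N,2N]$ and the cap-decomposition sketch are correct and consistent with how Lee's proof actually runs, but since you too ultimately defer the essential bilinear input to \cite{SL}, the two approaches coincide.
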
 

Note that Sogge and Schlag (see, Theorem 1.1, \cite{SGS}) proved the smoothing estimate \eqref{lee1} for $q \geq 5$ and up to an endpoint, bounds in \eqref{lee1} are of the best possible nature. In the same paper, they also consider the Fourier integral operator of the form 
$$F^{\phi}f(x,t)=\int e^{i \phi(x,t, \xi)} a(t,x, \xi) \, \hat{f}(\xi) \, d\xi,$$

where, $a \in C^{\infty}([1,2] \times \mathbb{R}^2 \times \mathbb{R}^2)$ vanishes for $x$ outside of a fixed compact set and satisfies 
$$|D_{t,x}^{\gamma_1} \, D_{\xi}^{\gamma_2} a(t,x, \xi)| \leq C_{\gamma}(1+|\xi|)^{-|\gamma_2|}.$$ Also, the phase functions are real, in $C^{\infty}([1,2] \times \mathbb{R}^2 \times \mathbb{R}^2 \setminus 0)$ and homogeneous of degree one in $\xi.$

Also, the above phase function $\phi$ satisfies the conditions
\begin{eqnarray} \label{4.10}
 det \frac{\partial^2 \phi}{\partial x \partial \xi} \neq 0 
 \end{eqnarray} on supp of $a$.
 
\begin{eqnarray} \label{4.11}
&& \frac{\partial \phi}{\partial t}= q(t, x, \phi_x^{'}), \,  \mbox{ Corank } \, q_{\xi \xi}^{"} =1, 
\end{eqnarray}
on supp of $a$.

Under these hypothesis, they have proved the following local smoothing estimate for $q\geq 5$ and  supp $\hat{f} \subset \{ \xi \in \mathbb{R}^2 : |\xi|\sim N\},$ with $\frac{1}{p} + \frac{3}{q} \, = \, 1$,
\bea \label{lee2}
\left(\int_{\mathbb{R}^2} \int_1^2 |F^{\phi}f(x,t)|^q \, dt \, dx\right)^{\frac{1}{q}} \, \leq \, C \, N^{\frac{3}{2}- \frac{6}{q}} \, \|f\|_{L^p(\mathbb{R}^2)}.
\eea

Now, for our operator $P_jf(x,t),$  we would like the get the following  local smoothing estimate for $\frac{1}{p} + \frac{3}{q} \, = \, 1, ~ \frac{14}{3} < q \leq \infty$,
\begin{eqnarray} \label{3.8}
\left(\int_{\mathbb{R}^2} \int_1^2 |P_jf(x)|^q \, dt \, dx\right)^{\frac{1}{q}} \, \leq \, C \, 2^{j(\frac{3}{2}- \frac{6}{q})} \, \|f\|_{L^p(\mathbb{R}^2)}.
\end{eqnarray}
 
If we use the Sogge and Schlag's local smoothing estimate \eqref{lee2}, we get the smoothing estimate \eqref{3.8} for $q \geq 5.$

To prove the smoothing estimate \eqref{3.8} for $q>14/3$, we first observe that
the proof of the Theorem \eqref{thm3.5} for Fourier integral operator with phase function $x \cdot \xi+t |\xi|$, they have used the bilinear cone restriction estimate of Wolff \cite{W}, and Tao \cite{T} together with a decomposition technique which was used in (\cite{TV2}, section 4). 

Since $q$ is real smooth function away from the origin and homogeneous of degree one, there are positive constant $d$ and $D$ so that 
\bea \label{impp}
d|\xi| \leq q(\xi) \leq D |\xi|.
\eea 

Also, in 2006,  S. Lee (see Theorem 1.2, \cite{SL1}) proved the bilinear cone restriction estimate for the cone $F ={(\xi, \tau ) \in R^3:     q( \xi )=\tau, 1 \leq  \tau  \leq  2}$, where $q$ is a smooth, homogeneous function of degree one in $\xi$ and the Hessian of $q$ has rank one. Now, appealing to the bilinear cone restriction estimate of S. Lee and by \eqref{impp}, we get the  local smoothing estimate \eqref{3.8}  (as can be seen by simple modification of the proof of proposition 1.2 in \cite{SL}) for $\frac{1}{p} + \frac{3}{q} \, = \, 1, ~ \frac{14}{3} < q \leq \infty$.

Now, we comeback to prove the estimate \eqref{mainest}. Let, $$\mathbb{M}_jf(x) \, = \, \sup_{1 \leq t \leq 2} |A_{j,t}f(x)|.$$
Now, using the local smoothing estimates \eqref{3.8}, from \eqref{3.6}, it is easy to see that for $\frac{1}{p} + \frac{3}{q} \, = \, 1,$ and $ \frac{14}{3} < q \leq \infty,$
\begin{eqnarray} \label{3.10}
\|\mathbb{M}_jf\|_q \leq \, C \, 2^{j(1- \frac{5}{q})} \, (|\sigma|+1)^{1/q} \, \|f\|_{L^p(\mathbb{R}^2)}.
\end{eqnarray}

By Plancherel's theorem and from the estimate \eqref{3.6'}, it is easy to see that for $j \geq 1,$

\begin{eqnarray} \label{3.11}
\|\mathbb{M}_jf\|_2 \leq \, C \, (|\sigma|+1)^{1/2} \,  \|f\|_{L^2(\mathbb{R}^2)}.
\end{eqnarray}

A complex interpolation between \eqref{3.10} and \eqref{3.11} shows that if $(\frac{1}{p}, \frac{1}{q})$ is contained in the closed triangle with vertices $(1, 0),~(\frac{5}{14}, \frac{3}{14}),~(\frac{1}{2}, \frac{1}{2})$ but is not on the closed line segment $[(\frac{5}{14}, \frac{3}{14}),~(\frac{1}{2}, \frac{1}{2})],$ then 

\begin{eqnarray} \label{3.12}
\|\mathbb{M}_jf\|_q \leq \, C \, (|\sigma|+1)^{1/q} \,  2^{j\frac{(\frac{3}{p}- \frac{1}{q} -1)}{2}} \, \|f\|_{L^p(\mathbb{R}^2)}.
\end{eqnarray}

To sum up the last estimates, we use the following interpolation lemma. An explicit statement and proof of the lemma can be found in \cite{SL}, (Lemma $2.6$). Now, we denote by $L^{p, r}$ the Lorentz spaces.

\begin{Lemma} \label{L3.7} (An interpolation lemma)\\
Let $\epsilon_1, ~ \epsilon_2 > 0. $ Suppose that $\{T_j\}$ is a sequence of linear (or sublinear) operators such that for some $1 \leq p_1, ~ p_2 < \infty,$ and $1 \leq q_1, ~q_2 < \infty,$
$$\|T_jf\|_{q_1} \leq M_1 2^{\epsilon_1 j} \, \|f\|_{p_1}, ~\|T_jf\|_{q_2} \leq M_2 2^{-\epsilon_2 j} \, \|f\|_{p_2}. $$
Then $T= \sum T_j$ is bounded from $L^{p, 1}$ to $L^{q, \infty}$ with 
$$\|Tf\|_{L^{q, \infty}} \leq C \, M_1^{\theta} \, M_2^{1- \theta} \, \|f\|_{L^{p,1}}, $$ where $\theta=\frac{\epsilon_2}{(\epsilon_1
+\epsilon_2)},~ \frac{1}{q}=\frac{\theta}{q_1}+\frac{(1-\theta)}{q_2}, ~\frac{1}{p}=\frac{\theta}{p_1}+\frac{(1-\theta)}{p_2}.$
\end{Lemma}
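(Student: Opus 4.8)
The plan is to reduce the claimed $L^{p,1}\to L^{q,\infty}$ bound to a restricted weak-type $(p,q)$ estimate for $T$, and then to establish the latter by a Bourgain-type summation in which, at each level $\lambda$, one splits the $j$-sum about a distinguished index. First I would invoke the classical fact that, for a sublinear operator, a uniform bound of the form $\|T\chi_E\|_{L^{q,\infty}}\leq A\,|E|^{1/p}$ over all sets $E$ of finite measure already implies $\|Tf\|_{L^{q,\infty}}\leq C\,A\,\|f\|_{L^{p,1}}$. This self-improvement of restricted weak-type estimates is exactly what the Lorentz endpoints $L^{p,1}$ (smallest at index $p$) and $L^{q,\infty}$ provide; if one does not wish to cite it, it follows from decomposing $f\in L^{p,1}$ into its dyadic level sets, bounding each layer by a characteristic function, and summing, the $\ell^1$-summability built into the $L^{p,1}$ norm compensating for the mere quasi-subadditivity of $L^{q,\infty}$. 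Either way it suffices to prove $\|T\chi_E\|_{L^{q,\infty}}\lesssim M_1^{\theta}M_2^{1-\theta}\,|E|^{1/p}$.

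Second, fixing $E$ with $|E|<\infty$ and setting $f=\chi_E$, I would feed $f$ into the two hypotheses and apply Chebyshev's inequality to obtain, for every $s>0$, the two distributional bounds $|\{|T_jf|>s\}|\leq (M_1 2^{\epsilon_1 j}|E|^{1/p_1}/s)^{q_1}$ and $|\{|T_jf|>s\}|\leq (M_2 2^{-\epsilon_2 j}|E|^{1/p_2}/s)^{q_2}$. The first is useful for small $j$ (its right side grows in $j$), the second for large $j$ (it decays in $j$). Since $T=\sum_j T_j$ is sublinear, for any weights $a_j\geq 0$ with $\sum_j a_j\leq 1$ one has the inclusion $\{|T\chi_E|>\lambda\}\subseteq\bigcup_j\{|T_j\chi_E|>a_j\lambda\}$, and hence $|\{|T\chi_E|>\lambda\}|\leq\sum_j|\{|T_j\chi_E|>a_j\lambda\}|$.

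Third, I would choose weights $a_j=c\,2^{-\eta|j-N|}$ with a small decay rate $0<\eta<\min(\epsilon_1,\epsilon_2)$ (and $c$ a normalizing constant so that $\sum_j a_j\leq 1$) and a center index $N=N(\lambda,|E|)$ to be fixed. Using the first distributional bound for $j\leq N$ and the second for $j>N$, the conditions $\eta<\epsilon_1$ and $\eta<\epsilon_2$ guarantee that both resulting geometric series converge and are dominated by their $j=N$ terms, giving $|\{|T\chi_E|>\lambda\}|\lesssim (M_1|E|^{1/p_1}/\lambda)^{q_1}2^{q_1\epsilon_1 N}+(M_2|E|^{1/p_2}/\lambda)^{q_2}2^{-q_2\epsilon_2 N}$. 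I would then pick $N$ to balance the two terms; writing the common value as a geometric mean with weight $\alpha=q_2\epsilon_2/(q_1\epsilon_1+q_2\epsilon_2)$ chosen to cancel the $N$-dependence collapses the right-hand side to $(M_1^{\theta}M_2^{1-\theta}|E|^{1/p}/\lambda)^{q}$, where the bookkeeping forces exactly $\theta=\epsilon_2/(\epsilon_1+\epsilon_2)$ together with $1/q=\theta/q_1+(1-\theta)/q_2$ and $1/p=\theta/p_1+(1-\theta)/p_2$. Multiplying by $\lambda$ and taking the supremum over $\lambda$ yields the restricted weak-type bound, and the first step then completes the lemma.

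I expect the main obstacle to be the reduction in the first step: because $L^{q,\infty}$ is only quasi-normed, one cannot naively sum contributions of individual atoms, and it is precisely the restricted weak-type formulation together with the $L^{p,1}$ domain that circumvents this. The second, more computational, point is keeping the choice of $N$ and the matching of the exponents $(\theta,p,q)$ consistent; this is routine but must be done carefully, and it is where the specific value $\theta=\epsilon_2/(\epsilon_1+\epsilon_2)$ and the harmonic relations for $1/p$ and $1/q$ emerge.
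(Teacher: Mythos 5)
The paper does not actually prove this lemma: it is stated with a pointer to Lemma~2.6 of S.~Lee's paper \cite{SL}, which is in turn Bourgain's interpolation (summation) trick. Your proposal supplies precisely that standard argument --- reduction to a restricted weak-type estimate, Chebyshev on each dyadic piece, a weighted union bound with geometric weights $c\,2^{-\eta|j-N|}$, $0<\eta<\min(\epsilon_1,\epsilon_2)$, and optimization of the splitting index $N$ --- and your exponent bookkeeping is correct: balancing $(M_1|E|^{1/p_1}/\lambda)^{q_1}2^{q_1\epsilon_1 N}$ against $(M_2|E|^{1/p_2}/\lambda)^{q_2}2^{-q_2\epsilon_2 N}$ does force $\theta=\epsilon_2/(\epsilon_1+\epsilon_2)$ together with $1/q=\theta/q_1+(1-\theta)/q_2$ and $1/p=\theta/p_1+(1-\theta)/p_2$. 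So you have given an actual proof of a statement the paper only cites; what the citation buys the paper is brevity, what your argument buys is self-containedness, and the two are mathematically the same method.

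One step is asserted more strongly than is true and should be repaired. For a merely sublinear $T$, a restricted weak-type bound on characteristic functions alone does \emph{not} in general self-improve to an $L^{p,1}\to L^{q,\infty}$ bound: the layer-cake decomposition of $f$ produces pieces that are dominated by characteristic functions but are not characteristic functions, and sublinearity provides no monotonicity allowing one to pass from $T\chi_{E_k}$ to $T f_k$. The fix is cosmetic in your setting: your second and third steps never use that the input is exactly $\chi_E$, only that $\|f\|_{p_i}\le |E|^{1/p_i}$ for $i=1,2$, so the identical balancing argument yields $\|Tg\|_{L^{q,\infty}}\lesssim M_1^{\theta}M_2^{1-\theta}|E|^{1/p}$ for every $g$ with $|g|\le\chi_E$, and this ``sub-characteristic'' form does sum over the dyadic layers of $f\in L^{p,1}$. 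That summation also uses that $L^{q,\infty}$ admits an equivalent norm, which requires $q>1$; this holds in the paper's application (where $q>2$), but is not guaranteed by the hypotheses of the lemma as stated, which permit $q_1=q_2=1$. With these two clarifications your argument is complete.
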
 

Using \eqref{3.12} and Lemma \ref{L3.7}, we have for $(\frac{1}{p}, \frac{1}{q}) \in [P, Q),$
\begin{eqnarray} \label{3.13}
\|\tilde{\mathbb{M}}^1f\|_{L^{q, \infty}} \leq \, C \, (|\sigma|+1)^{1/q} \, \|f\|_{L^{p,1}}.
\end{eqnarray}

Since $\mathbb{M}$ is a local operator, an interpolation (real interpolation) between these estimates and the trivial $L^{\infty}- L^{\infty}$ estimate, we get,  
 for $ (\frac{1}{p}, \frac{1}{q}) \in  \triangle \setminus \{P, Q\}$,
 \begin{eqnarray} \label{3.14}
 \|\tilde{\mathbb{M}}^1f\|_{L^{q}} \leq \, C \,(|\sigma|+1)^{1/q} \, \|f\|_{L^{p}}.
 \end{eqnarray}
 
For $ (\frac{1}{p}, \frac{1}{q}) \in \left[(\frac{1}{p}, \frac{1}{q}) :(\frac{1}{p}, \frac{1}{q}) \in  \triangle \setminus \{P, Q\} \right]  \cap \left[(\frac{1}{p}, \frac{1}{q}) :q > 2 \right],$ we thus get,
$$\|\tilde{\mathbb{M}}^1f\|_{L^q(\mathbb{R}^2)} \leq \, B \, (|\sigma|+1)^{1/q} \,  \|f\|_{L^p(\mathbb{R}^2)}.$$
 
Hence, we finish our proof of the lemma.
\end{proof}

We shall need one more lemma in our proof of Theorem \ref{T03.1}.
\begin{Lemma} \label{Lma4.1}
Let $L$ be an invertible linear map from $\mathbb{R}^2$ to itself with $det(J_L)=1,~ J_L=$ Jacobian matrix of $L,$ and let $\Gamma(s)$ and $\tilde{\Gamma(s)}$ be two curves in the plane, related by $\tilde{\Gamma(s)}=L(\Gamma(s)).$ Then the following two maximal functions have identical $L^p-L^q$ operator norms:
$$Mf(x)=\left|\sup_{1 \leq t \leq 2} \int f(x-t\Gamma(s)) \, ds\right|$$
$$\tilde{M}f(x)=\left|\sup_{1 \leq t \leq 2} \int f(x-t\tilde{\Gamma(s)}) \, ds\right|$$
\end{Lemma}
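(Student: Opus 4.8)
The plan is to exploit the linearity of $L$ to transfer the averaging through a change of variables, so that averaging over $\tilde{\Gamma}$ for $f$ becomes averaging over $\Gamma$ for the pulled-back function $f \circ L$. First I would set $g := f \circ L$ and observe that, because $L$ is linear, for each fixed $t$,
$$f(x - t\tilde{\Gamma}(s)) = f(x - tL(\Gamma(s))) = f\big(L(L^{-1}x - t\Gamma(s))\big) = g(L^{-1}x - t\Gamma(s)).$$
Since $L^{-1}x$ depends on neither $s$ nor $t$, integrating in $s$ and taking $\sup_{1 \leq t \leq 2}$ commutes with this substitution, yielding the pointwise identity $\tilde{M}f(x) = (Mg)(L^{-1}x)$.

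The second step is to compare the two norms using the unimodularity hypothesis $\det(J_L) = 1$. For the output norm, the change of variables $y = L^{-1}x$ (so that $dx = |\det J_L|\, dy = dy$) gives
$$\|\tilde{M}f\|_{L^q(\mathbb{R}^2)}^q = \int_{\mathbb{R}^2} |(Mg)(L^{-1}x)|^q \, dx = \int_{\mathbb{R}^2} |(Mg)(y)|^q \, dy = \|Mg\|_{L^q(\mathbb{R}^2)}^q.$$
Applying the same change of variables to $g = f \circ L$ gives $\|g\|_{L^p(\mathbb{R}^2)} = \|f\|_{L^p(\mathbb{R}^2)}$ for the input norm. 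Combining the two identities yields
$$\frac{\|\tilde{M}f\|_{L^q}}{\|f\|_{L^p}} = \frac{\|Mg\|_{L^q}}{\|g\|_{L^p}}.$$

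Finally, since $f \mapsto g = f \circ L$ is a bijection of $\mathscr{S}(\mathbb{R}^2)$ (with inverse $g \mapsto g \circ L^{-1}$), the supremum over all nonzero Schwartz $f$ on the left coincides with the supremum over all nonzero $g$ on the right, so the two $L^p \to L^q$ operator norms are equal. There is essentially no analytic obstacle here; the only points that require care are conceptual. The linearity of $L$ is exactly what lets one factor $L$ out of the argument of $f$ (an affine shift would spoil this clean pullback, introducing a $t$-dependent translation), and the hypothesis $\det(J_L) = 1$ is precisely what makes both the domain and range changes of variables measure-preserving, so that no Jacobian factors survive in either norm. Were unimodularity dropped, the operator norms would instead differ by a fixed power of $|\det J_L|$ determined by $p$ and $q$.
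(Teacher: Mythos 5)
Your proposal is correct and follows essentially the same route as the paper: the pointwise identity $\tilde{M}f(x) = (M(f\circ L))(L^{-1}x)$ followed by the unimodular change of variables $\det(J_L)=1$ in both the input and output norms. If anything, your closing observation that $f \mapsto f\circ L$ is a bijection of $\mathscr{S}(\mathbb{R}^2)$ makes the \emph{equality} of the two operator norms more explicit than the paper's write-up, which only exhibits the inequality $\|\tilde{M}f\|_{L^q} \leq \|M\|_{L^p\rightarrow L^q}\|f\|_{L^p}$ and leaves the reverse direction to the symmetric argument with $L^{-1}$.
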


\begin{proof}
We see that,
\begin{eqnarray}
&&\tilde{M}f(x) = \left|\sup_{1 \leq t \leq 2} \int f(x-t\tilde{\Gamma(s)}) \, ds\right| \nonumber \\
&=&\left|\sup_{1 \leq t \leq 2} \int f\left(L(L^{-1}x-t\Gamma(s))\right) \, ds\right| 
=\left|\sup_{1 \leq t \leq 2} \int f_L(L^{-1}x-t\Gamma(s)) \, ds\right| \nonumber \\
&=&M(f_L)(L^{-1}x),~~f_L(x)=f(Lx),\nonumber
\end{eqnarray}

Thus, using change of variable formula with $det(J_L)=1,$ we get
\begin{eqnarray}
\|\tilde{M}f(x)\|_{L^q}^q&=&\|Mf_L(L^{-1}x)\|_{L^q}^q=\|Mf_L(x)\|_{L^q}^q \nonumber \\
&\leq& \|M\|_{L^p \rightarrow L^q}^q\|f_L(x)\|_{L^p}^q=\|M\|_{L^p \rightarrow L^q}^q\|f(x)\|_{L^p}^q. \nonumber
\end{eqnarray}
Hence, we get conclusion of the Lemma.
\end{proof}

\section{Decomposition and proof of Theorem \ref{T03.1}}
In this section, we will prove our main Theorem \ref{T03.1}. The main idea of the proves of the Theorem \ref{T03.1} is to decompose the maximal operator $\mathbb{M}f$ into a family of maximal operators $\mathbb{M}_kf.$  In this paper, we use the well-known decomposition, the decomposition into parts where the curvature is approximately $2^{-k}$. In this connection, Iosevich \cite{IOSE} was able  to decompose his operator into parts where $s \sim 2^{-k},$ because he was interested only in finite type curves. Such a decomposition is not appropriate when one considers infinitely flat curves. Clearly these two methods are equivalent for finite type curves, and indeed, for such curves we shall obtain essentially the same estimates as Iosevich. Thus, for non-negative $f$, we have
$$\mathbb{M}_{\sigma}f(x) \leq C\sum_{k\geq 0} 2^{-k\sigma} M_kf(x)$$
where
$$M_kf(x)=\sup_{1 \leq t \leq 2} \left|\int_{I_k}f(x-t\Gamma(s)) \, ds\right|,$$
with $I_k=[s_{k+1}, s_k]$ is the interval where $2^{-k-1} \leq g^{''}(s) \leq 2^{-k}.$ By scaling and a change of variable, we see that $M_k$ will have the same $L^q \longrightarrow L^q$ operator norms as the operator $\tilde{M}_k$ , given by
$$\tilde{M}_k f(x)=|I_k|\sup_{1 \leq t \leq 2}\left|\int_{0}^1f\left(x-t\left(s+s_{k+1}|I_k|^{-1},\rho_k^{-1} g(s|I_k|+s_{k+1})+\rho_k^{-1}\right)\right) \, ds\right|$$
for any fixed $\rho_k>0.$ Suppose there exist constants $\eta_k, \beta_k, \alpha_k$ such that
\begin{eqnarray} \label{04.1}
&&\left(s+s_{k+1}|I_k|^{-1}, \, \rho_k^{-1} g(s|I_k|+s_{k+1})+\rho_k^{-1}\right) \nonumber \\
&&=\left(s+\alpha_k, \, \omega(s)+\beta_k+\eta_k(s+\alpha_k)\right)
\end{eqnarray}

for some curve $\omega$ of the type dealt with in Lemma \ref{Lma2}, and with $C^5$ norms bounded independent of $k$. In principal, $\omega$ ought to have a $k$ subscript, but the idea is that since
its $C^5$ norm can be bounded independent of $k$, it is effectively a constant curve (almost) independent of $k$.

By Lemma \ref{Lma4.1} (with $Lv=(v_1, v_2-\eta_k v_1)$), we would then have that $\tilde{M_k}$ has operator norms controlled by $|I_k|$ times those of the maximal function corresponding to the curve
$$(s+\alpha_k, \omega(s)+\beta_k)$$
which by Lemma \ref{Lma2} has operator norms bounded by a constant times
$$B|I_k||\beta_k|^{1/q},$$
for $ (\frac{1}{p}, \frac{1}{q}) \in \left[(\frac{1}{p}, \frac{1}{q}) :(\frac{1}{p}, \frac{1}{q}) \in \triangle \setminus \{P, Q\} \right]  \cap \left[(\frac{1}{p}, \frac{1}{q}) :q > 2 \right],$ with $B$ independent of $k,$ if $|\beta_k| \geq max \{|\alpha_k|, 2\}.$ 

The question is, how can we arrange that \eqref{04.1} be true ? For completeness, 
we shall briefly give the answer to this question. An explicit proof can also be found in \cite{GM}. 

A Taylor expansion of $g(s)$ about $s=s_{k+1}$ suggests writing
$$g(s|I_k|+s_{k+1})=g(s_{k+1})+s|I_k| g'(s_{k+1}) +\tilde{\omega}(s)$$
where $\tilde{\omega}(0)=\tilde{\omega}'(0)=0$ and $\tilde{\omega}$ and its derivatives inherit their monotonicity from $g.$

Notice that for $s \in [0,1], ~\tilde{\omega}^{''}(s)\sim 2^{-k}|I_k|^2.$ Hence it is natural to choose 
\begin{eqnarray} \label{04.2}
\omega(s)=2^k |I_k|^{-2}\{g(s|I_k|+s_{k+1})-g(s_{k+1})-s|I_k| g'(s_{k+1})\}
\end{eqnarray}

so that $1/2 \leq \omega^{''}(s) \leq 1.$ Notice that $\omega$ inherits the monotonicity properties of $\tilde{\omega}.$

Now, we see that 
$$\rho_k^{-1} g(s|I_k|+s_{k+1})+\rho_k^{-1}=\rho_k^{-1}\{2^{-k} |I_k|^2 \omega(s)+g(s_{k+1})+1+s|I_k| g'(s_{k+1})\}$$
and so choosing $\rho_k=2^{-k}|I_k|^2,$ we see that \eqref{04.1} will hold so long as 
$$\alpha_k=s_{k+1}|I_k|^{-1},~ \eta_k=2^k|I_k|^{-1} \, g'(s_{k+1})$$ and 
$$\beta_k=2^k|I_k|^{-2}\{g(s_{k+1})+1-s_{k+1} g'(s_{k+1})\}.$$

Now, we will show that $\beta_k,~ \alpha_k$ satisfies the conditions of Lemma \ref{Lma2}, i.e., $|\beta_k|\geq max\{|\alpha_k, 2|\}.$ Recall that, $I_k=[s_{k+1}, s_k]$ is the interval where $2^{-k-1} \geq g^{''} \geq 2^{-k}.$ Hence, by mean value theorem, for all but finitely many $k,$ we have $$s_{k+1} \, g'(s_{k+1}) \leq 1/2.$$
So that, we have $|\beta_k| \geq 2^{k-1} \, |I_k|^{-2},$ for all but finitely many $k.$ Thus, for all but finitely many $k,$ we have $|\beta_k|\geq max\{|\alpha_k, 2|\}$, since 
\begin{eqnarray*}
&&2^{k-1}|I_k|^{-2} \geq 2 \Leftrightarrow |I_k|^2 \leq 2^{k-2} \\
\mbox{ and } && \\ && 2^{k-1}|I_k|^{-2} \geq  \frac{s_{k+1}}{|I_k|} \Leftrightarrow |I_k| \leq \frac{2^{k-1}}{s_{k+1}},
\end{eqnarray*} 

both hold for all but finitely many $k.$

We notice that, $|\beta_k|\leq 2^k|I_k|^{-2}.$ Assuming we can show that $\omega^{(r)}(s)\leq C,$ for $r=1,2,3,4,5$ and $C$ independent of $k,$ then we have that $M_k$ has $L^p-L^q$ operator norms bounded by 
$$B|\beta_k|^{1/q} \, |I_k| \leq B 2^{k/q}|I_k|^{1-2/q}$$ so long as $ (\frac{1}{p}, \frac{1}{q}) \in \left[(\frac{1}{p}, \frac{1}{q}) :(\frac{1}{p}, \frac{1}{q}) \in \triangle \setminus \{P, Q\} \right]  \cap \left[(\frac{1}{p}, \frac{1}{q}) :q > 2 \right]$ (by Lemma \ref{Lma2}).
This would then complete the proof of Theorem \ref{T03.1}, as
\begin{eqnarray}
\|\mathbb{M}_{\sigma}\|_{L^p-L^q}&\leq& B\sum_{k \geq 0} 2^{-k\sigma} \|M_k\|_{L^p-L^q} \nonumber \\
&\leq& B \sum_{k \geq 0}2^{-k(\sigma-1/q)} |I_k|^{1-2/q} < \infty \nonumber
\end{eqnarray}

if $\sigma > 1/q.$ We return now to those derivative estimates for $\omega.$ We shall show that $\omega$ has $C^5$ norms independent of $k.$ The point here is that although $\omega$ depends on $k,$ we shall show that it has $C^5$ norm independent of $k$ and satisfies all the conditions of Lemma \ref{Lma2}.

From \eqref{04.2}, we notice that because the second derivative of $\omega$ is small, and because $\omega(0)=\omega'(0)=0,$ we have that $\omega'(s)\leq s$ and $\omega(s)\leq s^2/2.$ 

We have 
$$\omega^{(r)}(s)=2^k |I_k|^{r-2} \, g^{(r)}(s|I_k|+s_{k+1})$$
for $r=3,4,5.$ Given the monotonicity of $g^{(r)},$ and given that $2^{-k}=g^{''}(s_k),$  we clearly require only that 
\begin{eqnarray} \label{04.3}
|I_k|^{r-2} \frac{g^{(r)}(s_k)}{g^{('')}(s_k)} \leq C,
\end{eqnarray}
where $C$ independent of $k.$ This is where we shall use those conditions on $\log g^{''}$ and $\log g^{'''}.$ The proof of \eqref{04.3} can be found in \cite{GM}. We briefly give the outline of the proof for the case $r=3.$ For the cases $r=4$ and $r=5$ will follow from our assumption that $\log g{''}$ and $\log g^{'''}$ are concave respectively. To proof \eqref{04.3} for the case $r=3,$ it is enough to show that $$|I_k|\sim \frac{g^{''}(s_k)}{g^{'''}(s_k)}.$$ 
Now the mean value theorem gives us the following:
$$|I_k|=(g^{''})^{-1}(2^{-k})-(g^{''})^{-1}(2^{-k-1})=2^{-k-1} \frac{1}{g^{'''}(z)}$$ for some $z \in (s_{k+1}, s_k),$ and we get
\begin{eqnarray} \label{004.4}
c\frac{g^{''}(s_k)}{g^{'''}(s_k)} \leq |I_k| \leq C \frac{g^{''}(s_{k_1})}{g^{'''}(s_{k+1})}.
\end{eqnarray}

As our assumptions imply that $\frac{g^{''}(s)}{g^{'''}(s)}$ is monotonic increasing, and as $s_{k+1}< s_k,$ the right-hand side of \eqref{004.4} is bounded by a constant times the left-hand side giving the stated result, that is $|I_k|\sim \frac{g^{''}(s_k)}{g^{'''}(s_k)}.$ This completes the proof for the case $r=3$ and hence the proof of Theorem \ref{T03.1}.

\section{The case $\sigma=1/q$ and sharpness of results}
In this section, we will see that a power $\sigma < 1/q$ of the curvature will not guarantee $L^p-L^q,~ (q> p)$ boundedness for our maximal function. For example, consider $\Gamma(s)=(s, s^m+1)$ and the test function 
$$f(x)=\eta(x) \, |x_2|^{-1/q}$$ for some $q>p,$ where $\eta \in C_c^{\infty}$ is a smooth bump function such that $\eta(x)=1$ if $|x|\leq 1,$ and $\eta(x)=0$ if $|x|\geq 2.$ Notice that $f \in L^r$ for all $r < q,$ and hence $f$ is in $L^p.$

Consider only points $x$ that lie within distance $1/2$ of the origin, so that $|x-x_2 \Gamma(s)| \leq 1$ for all $s \in [0,1].$ Then we have for $1 \leq x_2 \leq 2$,
\begin{eqnarray}
\mathbb{M}_{\sigma}f(x) &\geq& C \int_0^1f(x-x_2\Gamma(s)) s^{(m-2) \sigma} \, ds \nonumber \\
&\geq& C \int_0^1|x_2|^{-1/q} s^{(m-2)\sigma-m/q} \, ds
\end{eqnarray}

which diverges unless $(m-2)\sigma-m/q > -1.$ So, if we wish to choose $\sigma$ independent of $m \geq 2$, we clearly need $\sigma \geq 1/q.$

It is more interesting to ask what happens when we consider $\sigma=1/q < 1/2$. In this case we know by our earlier work that
$$\|\mathbb{M}_{\sigma}f\|_{L^p} \leq B \, \|f\|_{L^p} \, \sum_{k\geq 0}|I_k|^{1-2/q},$$
so if $|I_k|$ decays sufficiently fast, then we will have that $\mathbb{M}$ is $L^p-L^q$ bounded. For example, we have the following corollary.
\begin{Corollary} \label{Cor5.1}
Suppose $\Gamma$ and $g$ are as in Theorem \ref{T03.1}, and suppose in addition that there exists an $\epsilon > 0$ such that 
$$|\log ~ g^{''}(s)|^{1+\epsilon}\left(\frac{g^{''}(s)}{g^{'''}(s)}\right)$$
is bounded.

Then we have $|I_k|\leq Bk^{-1-\epsilon},$ and for $ (\frac{1}{p}, \frac{1}{q}) \in \left[(\frac{1}{p}, \frac{1}{q}) :(\frac{1}{p}, \frac{1}{q}) \in \triangle \setminus \{P, Q\} \right]  \cap \left[(\frac{1}{p}, \frac{1}{q}) :q = \sigma^{-1} > 2(\epsilon^{-1}+1) \right]$, the maximal function of Theorem \ref{T03.1} is bounded on $L^p-L^q.$

\end{Corollary}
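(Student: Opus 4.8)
The goal is to prove Corollary \ref{Cor5.1}, which deduces the endpoint case $q = \sigma^{-1}$ from the main theorem's machinery together with a decay estimate on the interval lengths $|I_k|$. The plan is to exploit the summation bound that has already been established in the proof of Theorem \ref{T03.1}, namely that for $(\frac{1}{p}, \frac{1}{q})$ in the admissible region the maximal operator satisfies
\begin{eqnarray*}
\|\mathbb{M}_{\sigma}f\|_{L^q} \leq B\,\|f\|_{L^p} \sum_{k \geq 0} 2^{-k(\sigma - 1/q)} |I_k|^{1-2/q}.
\end{eqnarray*}
In the endpoint regime $\sigma = 1/q$ the geometric factor $2^{-k(\sigma-1/q)}$ degenerates to $1$, so the finiteness of the sum now rests entirely on the decay of $|I_k|^{1-2/q}$. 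Thus the whole corollary reduces to showing two things: first, that the hypothesis on $|\log g''(s)|^{1+\epsilon}(g''/g''')$ forces $|I_k| \leq B k^{-1-\epsilon}$; and second, that this decay rate is enough to make $\sum_k |I_k|^{1-2/q}$ converge precisely when $q > 2(\epsilon^{-1}+1)$.

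For the first step I would start from the relation $|I_k| \sim g''(s_k)/g'''(s_k)$ already derived near \eqref{004.4} in the proof of Theorem \ref{T03.1}. Since $I_k$ is the interval where $2^{-k-1} \leq g''(s) \leq 2^{-k}$, on $I_k$ we have $|\log g''(s)| \sim k \log 2$, so the boundedness hypothesis reads $k^{1+\epsilon}\big(g''(s_k)/g'''(s_k)\big) \leq B'$ up to the constant $(\log 2)^{1+\epsilon}$. Combining this with $|I_k| \sim g''(s_k)/g'''(s_k)$ immediately yields $|I_k| \leq B k^{-1-\epsilon}$, which is the first claim.

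For the second step I substitute this bound into the series, obtaining
\begin{eqnarray*}
\sum_{k \geq 0} |I_k|^{1-2/q} \leq B^{1-2/q} \sum_{k \geq 1} k^{-(1+\epsilon)(1-2/q)},
\end{eqnarray*}
which converges exactly when $(1+\epsilon)(1-2/q) > 1$. Solving this inequality for $q$ gives $1 - 2/q > (1+\epsilon)^{-1}$, i.e. $2/q < \epsilon/(1+\epsilon)$, i.e. $q > 2(1+\epsilon)/\epsilon = 2(\epsilon^{-1}+1)$, which is precisely the stated range. Since $q = \sigma^{-1}$ here, the hypothesis $q = \sigma^{-1} > 2(\epsilon^{-1}+1)$ guarantees convergence, and the resulting finite bound on the sum shows $\mathbb{M}_{\sigma}$ is bounded from $L^p$ to $L^q$ for $(\frac{1}{p},\frac{1}{q})$ in the admissible triangle.

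The only genuine subtlety, and the step I would treat most carefully, is the passage from the pointwise hypothesis to the interval estimate: one must verify that evaluating $g''/g'''$ at the endpoint $s_k$ (rather than a generic point of $I_k$) is legitimate, which is exactly where the monotonicity of $g''/g'''$ established in the proof of Theorem \ref{T03.1} is used, together with the comparability $|\log g''(s)| \sim k$ holding uniformly on $I_k$. Everything else is a routine comparison of series, so the main obstacle is ensuring these comparability constants are uniform in $k$ and consistent with the constants appearing in \eqref{004.4}.
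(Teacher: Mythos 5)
Your proposal is correct, and it reaches the key estimate $|I_k|\leq Bk^{-1-\epsilon}$ by a different (and arguably more direct) route than the paper. The paper introduces an auxiliary function $\gamma:\mathbb{R}^+\to[0,1]$ defined implicitly by $\log g^{''}(\gamma(x))=-x^{-1/\epsilon}\log 2$, so that $\gamma(k^{-\epsilon})=s_k$; then $|I_k|=\gamma(k^{-\epsilon})-\gamma((k+1)^{-\epsilon})=(k^{-\epsilon}-(k+1)^{-\epsilon})\,\gamma'(z)$ by the mean value theorem, and a chain-rule computation shows that uniform boundedness of $\gamma'$ is exactly the hypothesis $|\log g^{''}(s)|^{1+\epsilon}\bigl(g^{''}(s)/g^{'''}(s)\bigr)\leq B$; this gives $|I_k|\leq Ck^{-1-\epsilon}$, and the paper then says the result ``follows easily.'' You instead recycle the comparability $|I_k|\sim g^{''}(s_k)/g^{'''}(s_k)$ already proved via \eqref{004.4} (using monotonicity of $g^{''}/g^{'''}$, which comes from concavity of $\log g^{''}$), and simply evaluate the hypothesis at $s=s_k$, where $|\log g^{''}(s_k)|=k\log 2$ exactly; this yields the same bound without introducing $\gamma$. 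Your route makes the role of the earlier machinery explicit and spares the implicit-function bookkeeping, while the paper's formulation packages the hypothesis neatly as a derivative bound; you also carry out the series computation $\sum_k k^{-(1+\epsilon)(1-2/q)}<\infty \iff q>2(\epsilon^{-1}+1)$ that the paper leaves implicit, which is a genuine improvement in completeness. One small remark: the ``subtlety'' you flag at the end is actually a non-issue -- since the hypothesis is a pointwise bound valid for all $s$, evaluating at $s_k$ requires no justification beyond the comparability $|I_k|\lesssim g^{''}(s_k)/g^{'''}(s_k)$, which you correctly cite.
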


\begin{proof}
The proof of Corollary \ref{Cor5.1} is a matter of simple calculus, working with the function $\gamma : \mathbb{R}^+ \longrightarrow [0, 1],$ defined implicitly by
$$\log ~ g^{''}(\gamma(x))=-(x^{-1/{\epsilon}}) ~ \log ~ 2.$$
This is chosen so that
$$\gamma(k^{-\epsilon}) = s_{k},$$ where $s_k$ is as before, defined by $g^{''}(s_k)=2^{-k}.$ We have that $|I_k|=\gamma(k^{-\epsilon}) - \gamma((k+1)^{-\epsilon}),$ which is $(k^{-\epsilon}-(k+1)^{-\epsilon}) \, \gamma'(z),$ for some $z \in (k, k+1).$ We ask that $\gamma'$ be uniformly bounded, this being the hypothesis of Corollary \ref{Cor5.1}, and hence $|I_k|\leq C k^{-\epsilon-1},$ from which the desired result follows easily. 
\end{proof}

\begin{Remark}
Notice that, the Corollary \ref{Cor5.1} applies to the family of flat curves $g(s)=\exp(-s^{-N}),$ for $N \geq 1,$ with $\epsilon=1/N.$
\end{Remark}

\noindent
{\bf Acknowledgements:} 
I wish to thank the Harish-Chandra Research institute, the Dept. of Atomic Energy, Govt. of India, for providing excellent research facility.

%\newpage

\end{document}